\def\opn#1#2{\def#1{\operatorname{#2}}} 
\opn\diam{diam}
\opn\last{last}
\opn\first{first}
\opn\pff{proof}
\opn\Pf{proof} \opn\GL{GL} \opn\SL{SL} \opn\mod{mod} \opn\ord{ord}
\newtheorem{thm}{Theorem}[section]
\newtheorem{cor}[thm]{Corollary}
\newtheorem{prop}[thm]{Proposition}
\theoremstyle{definition}
\newtheorem{defn}[thm]{Definition}
\theoremstyle{remark}
\newtheorem{rem}[thm]{Remark}
\numberwithin{equation}{section}
\let\epsilon\varepsilon
\let\phi=\varphi
\let\kappa=\varkappa
\def\qed{\ifhmode\textqed\fi
   \ifmmode\ifinner\quad\qedsymbol\else\dispqed\fi\fi}
\def\textqed{\unskip\nobreak\penalty50
    \hskip2em\hbox{}\nobreak\hfil\qedsymbol
    \parfillskip=0pt \finalhyphendemerits=0}
\def\dispqed{\rlap{\qquad\qedsymbol}}
\begin{document}

\title[Closed orders and closed graphs]
 {Closed orders and closed graphs $^1$}


\author[Marilena Crupi]{Marilena Crupi}
\address[Marilena Crupi]{Department of Mathematics and Computer Science,
Universiy of Messina\\ Viale Ferdinando Stagno d'Alcontres, 31\\ 98166 Messina, Italy.}
\email{mcrupi@unime.it}

\subjclass[2010]{Primary 05C25. Secondary 13C05.}

\keywords{Closed order, closed graphs, proper interval order, proper interval graphs.}

\thanks{\textbf{$^1$ To appear in Analele Stiintifice ale Universitatii Ovidius Constanta}}

\maketitle
%

\begin{abstract}  The class of closed graphs by a linear ordering on their sets of vertices is investigated. 
A recent characterization of such a class of graphs is analyzed by using tools from the proper interval graph theory.
\end{abstract}

\section*{Introduction}
Let $G$ be a simple graph with finite vertex set $V(G)$ and  edge set $E(G)$. 
In the last years, several authors have focused their attention on the class of \emph{closed ideals} (see, for instance, \cite{CE, CR, CR2, HEH, HH} and the references therein). Let $S=K[x_1, \ldots, x_n,y_1,\ldots, y_n]$ be a polynomial ring with coefficients in a field $K$. The closed graphs were introduced in \cite{HH} in order to characterize those graphs, which, for suitable labeling of their edges, do have a quadratic Gr\"obner basis with respect to the lexicographic order induced by $x_1 > \cdots >x_n>y_1 > \cdots >y_n$. Such a  class of graphs is strictly related to the so-called \emph{binomial edge ideal} \cite{HH}. The binomial  edge ideal of a labeled graph $G$, denoted by  $J_G$, is the ideal of $S$ generated by the binomials $f_{ij} = x_iy_j - x_jy_i$ such that $i<j$ and $\{i,j\}$ is an edge of $G$.
In \cite{CR}, the authors have shown that the existence of a quadratic Gr\"obner basis for 
$J_G$ is not related to the lexicographic order on $S$. Indeed, one of the main results in the paper implies that the closed graphs are the only graphs for which $J_G$ has a quadratic Gr\"obner basis for some monomial order on $S$.  
Afterwards, the same  authors have proved that the class of closed graphs is isomorphic to the well known class of \emph{proper interval graphs} \cite{CR2}. 
Such a class of graphs has been strongly studied from both the theoretical and algorithmic point of view and many linear-time algorithms for proper interval graph recognition have been developed (see, for instance \cite{BL, FG, Ha, HMP, PD}  and the references therein).  Hence, the isomorphism in \cite{CR2} implies the existence of linear-time algorithms also for the closed graph recognition. 
In this note we study the behavior of closed graphs by using methods typical of the class of proper interval graphs.

The plan of the paper is the following. Section \ref{sec:pre} contains some preliminary notions that will be used in the paper. In Section \ref{sec:proper}, we discuss some results on proper interval graphs in order to compare such a class of graphs with the class of closed graphs. We introduce the \emph{closed orderings} (Definition \ref{def:closed}) and observe that closed orderings and \emph{proper interval orderings} (Definition \ref{def:proper}) coincide; as a consequence, we recover the isomorphism between the class of closed graphs and the class of proper interval graphs (Corollary \ref{cor:equiv}). In Section \ref{sec:inter}, we show how a recent characterization of closed graphs,  due to Cox and Erskine \cite{CE}, can be obtained via some properties of proper interval graphs (Theorem \ref{thm:narrow2}).

\section{Preliminaries} \label{sec:pre}
In this Section, we collect some notions that will be useful in the development of the paper.

Let $G$ be a simple, finite graph. Denote by $V(G)$ the set of vertices of $G$ and by $E(G)$ its edge set.
Let $v, w \in V(G)$. A \emph{path} $\pi$ of \emph{length} $n$ from $v$ to $w$ is a sequence of vertices $v=v_0, v_1, \ldots, v_n=w$ such that $\{v_i, v_{i+1}\}$ is an edge of the underlying graph.  A path $v_0, v_1, \ldots, v_n$ is \emph{closed} if $v_0=v_n$. A graph $G$ is \textit{connected} if for every pair of vertices $u$ and $v$ there is a path from $u$ to $v$.
The \emph{distance} $d(u,v)$ between vertices $u,v$ of a graph $G$ is the length of the shortest path connecting
them, and the \emph{diameter} $\diam(G)$ of $G$ is the maximum distance between two vertices
of $G$. A \emph{cycle} of \emph{length} $n$ is a closed path $v_0, v_1, \ldots, v_n$ in which $n\geq 3$. $G$  is \emph{chordal} or \emph{triangulated} if its cycles of four or more vertices has a \emph{chord}, which is an edge joining two non adjacent vertices of the cycle. 
Finally, $G$ is \emph{claw-free} (or \emph{net-free}, or \emph{tent-free}, respectively) if $G$ does not contain as induced subgraph the \emph{claw} (or the \emph{net}, or the \emph{tent}, respectively):
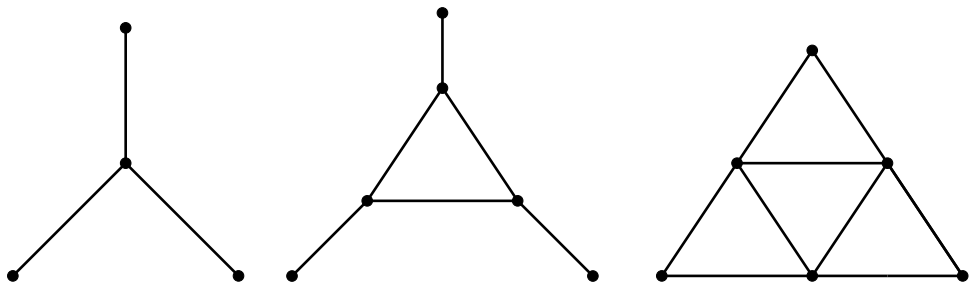
\begin{figure}[H]
\centering
\begin{tabular}{ccc}
\begin{tikzpicture}
\filldraw 
(0,0) circle (2pt) node[below left][black] {}
(1.5,1.5) circle (2pt) node[below right][black] {}
(1.5,3.3) circle (2pt) node[above right][black] {}
(3,0) circle (2pt) node[below right][black] {};
\draw [line width=1pt] (0,0) -- (1.5,1.5) node[midway,left] {};
\draw [line width=1pt] (1.5,1.5) -- (3,0) node[midway,right] {};
\draw [line width=1pt] (1.5,1.5) -- (1.5,3.3) node[midway,right] {};
\end{tikzpicture} &
\begin{tikzpicture}
\filldraw (-0.5,-0.5) circle (2pt) node[below left][black] {}
(-1.5,-1.5) circle (2pt) node[below right][black] {}
(2.5,-1.5) circle (2pt) node[below right][black] {}
(0.5,2) circle (2pt) node[below right][black] {}
(0.5,1) circle (2pt) node[above right][black] {}
(1.5,-0.5) circle (2pt) node[below right][black] {};
\draw [line width=1pt] (-0.5,-0.5) -- (0.5,1.0) node[midway,left] {};
\draw [line width=1pt] (0.5,1.0) -- (1.5,-0.5) node[midway,right] {};
\draw [line width=1pt] (-0.5,-0.5) -- (1.5,-0.5) node[midway,right] {};
\draw [line width=1pt] (-1.5,-1.5) -- (-0.5,-0.5) node[midway,left] {};
\draw [line width=1pt] (1.5,-0.5) -- (2.5,-1.5) node[midway,right] {};
\draw [line width=1pt] (0.5,1.0) -- (0.5,2) node[midway,right] {};
\end{tikzpicture} &
\begin{tikzpicture}
\filldraw (0,0) circle (2pt) node[below left][black] {}
(1,1.50) circle (2pt) node[above right][black] {}
(2,0) circle (2pt) node[below right][black] {}
(3,1.5) circle (2pt) node[below right][black] {}
(2,3)circle (2pt) node[below right][black] {}
(4,0)circle (2pt) node[below right][black] {};
\draw [line width=1pt] (0,0) -- (1,1.50) node[midway,left] {};
\draw [line width=1pt] (1,1.50) -- (2,0) node[midway,right] {};
\draw [line width=1pt] (0,0) -- (3,0) node[midway,right] {};
\draw [line width=1pt] (2,0) -- (3,1.50) node[midway,right] {};
\draw [line width=1pt] (1,1.50) -- (3,1.5) node[midway,right] {};
\draw [line width=1pt] (3,0) -- (4,0) node[midway,right] {};
\draw [line width=1pt] (3,1.50) -- (4,0) node[midway,right] {};
\draw [line width=1pt] (1,1.5) -- (2,3) node[midway,right] {};
\draw [line width=1pt] (2,3) -- (4,0) node[midway,right] {};
\end{tikzpicture}
\end{tabular}
\caption{\label{claw}\emph{The claw} (left), \emph{the net} (middle), \emph{the tent} (right).}
\end{figure}
\section{Closed orderings}\label{sec:proper}
In this Section, we analyze the isomorphism between the class of closed graphs and the class of proper interval graphs \cite{CR2} by vertex orderings. 

Set $[n] = \{1, \ldots,n \}$. A \emph{vertex ordering} (also called  a \emph{labeling}) of a $n$-vertex graph $G$ is a bjection $\sigma : V(G) \rightarrow [n]$. We write $u<_{\sigma}v$ to mean that $\sigma(u) < \sigma(v)$, for $u,v \in V(G)$.
If $\sigma$ is a vertex ordering of a $n$-vertex graph $G$, the vertices of $G$ can be ordered as $v_1, v_2, \ldots, v_n$ such that
\[v_1 <_{\sigma}v_2 <_{\sigma} \cdots <_{\sigma} v_n.\]
\begin{defn} \label{def:proper} Let $\sigma$ be a vertex ordering of a graph $G$. Ordering $\sigma$ is called a \emph{proper interval ordering} if for every triple $u, v,w$ of vertices of $G$ where $u <_{\sigma} v <_{\sigma} w$
and $\{u, w\}\in E(G)$, one has $\{u,v\}, \{v,w\} \in E(G)$.
\end{defn}
\begin{rem} \label{rem:closed} Let $\sigma$ be a vertex ordering of a graph $G$ and let
\[v_1 <_{\sigma}v_2 <_{\sigma} \cdots <_{\sigma} v_n\]
be the ordering of its vertices. $\sigma$ is a \emph{proper interval ordering} if for every triple $v_i, v_j, v_k$ of vertices of $G$, with $i < j < k$ and $\{v_i, v_k\}\in E(G)$, one has $\{v_i, v_j\}, \{v_j, v_k\} \in E(G)$.
\end{rem}

The vertex ordering above defined gives an important characterization of the so-called \emph{proper interval graphs}.

\begin{defn} A graph $G$
is an \emph{interval graph} if to each vertex $v \in V(G)$ it is possible to associate a closed interval $I_v = [\ell_v, r_v]$ of the real line such that two distinct vertices $u, v \in V(G)$ are adjacent if
and only if $I_u \cap I_v \neq \emptyset$.
\end{defn}

The family $\{I_v\}_{v\in V(G)}$ is \textit{an interval representation}
of $G$.

\begin{defn}
A graph $G$ is a \emph{proper interval graph} if there is an interval
representation of $G$ in which no interval properly contains other intervals.
\end{defn}
\begin{thm} \label{thm:proper}  {\em \cite{LO} }A graph $G$ is a proper interval graph if and only if $G$
has a proper interval ordering.
\end{thm}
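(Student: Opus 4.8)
The plan is to prove the two implications separately, passing between the geometric interval data and the combinatorial condition of Remark \ref{rem:closed}.

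For the forward implication, I would start from a proper interval representation $\{I_v=[\ell_v,r_v]\}_{v\in V(G)}$, perturb the endpoints slightly so that all of them are distinct, and define $\sigma$ by ordering the vertices according to their left endpoints $\ell_v$. The first fact to record is that in a representation with no proper containments the left order agrees with the right order: if $\ell_u<\ell_v$ then $r_u<r_v$, since otherwise $I_v\subsetneq I_u$. Granting this, I would verify the condition of Remark \ref{rem:closed}. Take $v_i<_\sigma v_j<_\sigma v_k$ with $\{v_i,v_k\}\in E(G)$. Then $I_{v_i}\cap I_{v_k}\neq\emptyset$ together with $\ell_{v_i}<\ell_{v_k}$ forces $\ell_{v_k}\le r_{v_i}$. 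Hence $\ell_{v_j}<\ell_{v_k}\le r_{v_i}$ gives $I_{v_i}\cap I_{v_j}\neq\emptyset$, and $\ell_{v_k}\le r_{v_i}<r_{v_j}$ gives $I_{v_j}\cap I_{v_k}\neq\emptyset$, so both $\{v_i,v_j\}$ and $\{v_j,v_k\}$ are edges. Thus $\sigma$ is a proper interval ordering.

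For the converse, I would start from a proper interval ordering $v_1<_\sigma\cdots<_\sigma v_n$ and build an explicit representation with integer endpoints. For each $i$ set $b_i=\max\{j:\{v_i,v_j\}\in E(G)\}$, with the convention $b_i=i$ when $v_i$ has no neighbor of larger index, and assign $I_{v_i}=[i,b_i]$. The crucial lemma is that the right endpoints are monotone: $i<j$ implies $b_i\le b_j$. Indeed, if $b_i>b_j$ then $v_{b_i}$ is a neighbor of $v_i$ with $i<j<b_i$, so Remark \ref{rem:closed} forces $\{v_j,v_{b_i}\}\in E(G)$, contradicting the maximality of $b_j$. With monotonicity in hand no interval is properly contained in another: for $i<j$ the left endpoints satisfy $i<j$ and the right endpoints satisfy $b_i\le b_j$, so neither of $I_{v_i},I_{v_j}$ can sit strictly inside the other, and the representation is proper.

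It then remains to check that adjacency matches intersection. For $i<j$: if $\{v_i,v_j\}\in E(G)$ then $j\le b_i$, so $j\in I_{v_i}\cap I_{v_j}$; conversely, if $I_{v_i}\cap I_{v_j}\neq\emptyset$ then $j\le b_i$, whence $b_i>i$ and $\{v_i,v_{b_i}\}\in E(G)$, and applying Remark \ref{rem:closed} to the triple $v_i<_\sigma v_j\le_\sigma v_{b_i}$ yields $\{v_i,v_j\}\in E(G)$. This establishes the equivalence. The heart of the argument, and the step I expect to be the main obstacle, is the consecutiveness of the closed neighborhoods encoded in the monotonicity lemma $b_i\le b_j$; once that is in place, both the properness of the representation and the intersection characterization are routine endpoint bookkeeping, and the forward direction reduces to the single observation that the left order equals the right order.
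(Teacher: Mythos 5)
The paper offers no proof of this theorem --- it quotes it from \cite{LO} --- so your attempt can only be judged on its own merits; in outline it is the standard Looges--Olariu argument. Your forward direction is correct (modulo the routine, hand-waved justification that the endpoints of a proper representation may be perturbed to be pairwise distinct without changing the intersection graph or creating containments), and in the converse your key monotonicity lemma ($i<j$ implies $b_i\le b_j$) is proved correctly, as is the equivalence ``adjacent iff intersecting'' for the intervals $[i,b_i]$.

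However, the properness verification in the converse fails as stated. From $i<j$ and $b_i\le b_j$ you conclude that neither interval can sit strictly inside the other, but when $b_i=b_j$ you get $I_{v_j}=[j,b_j]\subsetneq[i,b_i]=I_{v_i}$, a proper containment. Such ties are not exotic --- they occur whenever two vertices share the same largest neighbor: already for $G=K_2$ with $v_1<_\sigma v_2$ you have $b_1=b_2=2$, hence $I_{v_2}=[2,2]\subsetneq[1,2]=I_{v_1}$, so your construction yields an interval representation that is not proper, and the conclusion ``$G$ is a proper interval graph'' does not follow from it. The repair is small: break ties by setting $I_{v_i}=\bigl[\,i,\;b_i+\tfrac{i}{n+1}\,\bigr]$. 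Your monotonicity lemma then makes the right endpoints strictly increasing in $i$ (if $b_i=b_j$ then $\tfrac{i}{n+1}<\tfrac{j}{n+1}$; if $b_i<b_j$ the integer gap dominates), and since the left endpoints are strictly increasing as well, no containments can occur; moreover, because $j$ and $b_i$ are integers and $0<\tfrac{i}{n+1}<1$, one still has $I_{v_i}\cap I_{v_j}\neq\emptyset$ for $i<j$ if and only if $j\le b_i$, so your adjacency argument goes through verbatim. With that one-line fix the proof is complete.
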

Proper interval graphs are strictly related to  \emph{closed graphs}, introduced  by Herzog et al. in \cite{HH}. 
Recently, in \cite{CR2}, the authors have shown that closed graphs and proper interval graphs are synonyms via a graph isomorphism involving some technical results contained in \cite{HEH}.  Here, we show that the above connection between these classes of graphs can be obtained by vertex orderings.

\begin{defn} Let $G$ be a graph on the vertex set $[n]$, $G$ is \emph{closed with respect to the given labeling}, if the following condition
is satisfied:\\
for all $\{i,j\}, \{k, \ell\}\in E(G)$ with $i<j$ and $k<\ell$, one has $\{j,\ell\}\in E(G)$ if $i=k$ but $j \neq \ell$, and $\{i,k\}\in E(G)$ if $j=\ell$, but $i \neq k$.

$G$ is a \emph{closed} graph if there exists a labeling for which is closed.
\end{defn}

\begin{defn} \label{def:closed} Let $\sigma$ be a vertex ordering of a graph $G$ and let
\[v_1 <_{\sigma}v_2 <_{\sigma} \cdots <_{\sigma} v_n,\]
be the ordering of its vertices. Ordering $\sigma$ is called \emph{closed} if for all edges $\{v_i, v_j\}$ and $\{v_k, v_{\ell}\}$ with $i<j$ and $k< \ell$, one has $\{v_j, v_{\ell}\} \in E(G)$ if $i=k$, but $j \neq \ell$ and $\{v_i, v_k\} \in E(G)$ if $j=\ell$, but $i \neq k$.
\end{defn}
Hence, one immediately obtains the following characterization of a closed graph by a closed ordering (see also \cite{CE}).
\begin{prop} \label{prop:closed} A graph $G$ is a closed graph if and only if $G$
has a closed ordering.
\end{prop}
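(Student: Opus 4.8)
The plan is to observe that Proposition \ref{prop:closed} is essentially a reformulation of the definition of a closed graph, obtained by identifying a labeling of $G$ with a vertex ordering. Indeed, a labeling of an $n$-vertex graph $G$ is a bijection $V(G) \to [n]$, which is precisely the notion of a vertex ordering $\sigma$; under this identification, assigning the label $i$ to a vertex $v$ is the same as declaring $v$ to be the $i$-th vertex in the order, i.e. $v = v_i$ where $v_1 <_{\sigma} v_2 <_{\sigma} \cdots <_{\sigma} v_n$. Thus the whole proof reduces to matching the two closedness conditions under this dictionary.

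First I would prove the forward implication. Suppose $G$ is a closed graph. By definition there is a labeling $\sigma : V(G) \to [n]$ for which $G$ is closed. Regard $\sigma$ as a vertex ordering and write $v_1 <_{\sigma} \cdots <_{\sigma} v_n$ for the induced order, so that $v_i$ is the vertex carrying the label $i$. Then for any two edges $\{v_i, v_j\}$ and $\{v_k, v_{\ell}\}$ with $i < j$ and $k < \ell$, the closedness of $G$ with respect to the labeling yields $\{v_j, v_{\ell}\} \in E(G)$ whenever $i = k$ and $j \neq \ell$, and $\{v_i, v_k\} \in E(G)$ whenever $j = \ell$ and $i \neq k$. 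This is exactly the requirement of Definition \ref{def:closed}, so $\sigma$ is a closed ordering.

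Conversely, suppose $G$ admits a closed ordering $\sigma$, with induced order $v_1 <_{\sigma} \cdots <_{\sigma} v_n$. Taking the labeling that sends $v_i$ to $i$, the closed-ordering condition translates verbatim into the statement that $G$ is closed with respect to this labeling, whence $G$ is a closed graph. Both directions therefore follow by simply reading off the relevant definition.

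The only point that deserves care — and the step I would check most carefully — is the bookkeeping between the two notations: one must confirm that ``the edge $\{v_i, v_j\}$ with $i < j$'' in Definition \ref{def:closed} corresponds exactly to ``the edge $\{i,j\}$ with $i < j$'' in the definition of closedness with respect to a labeling, so that the indices $i,j,k,\ell$ play identical roles in both formulations and the two conditions coincide after relabeling. Since no genuine combinatorial argument is required beyond this translation, there is no substantive obstacle; the proposition holds by unwinding the definitions.
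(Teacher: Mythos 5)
Your proof is correct and matches the paper's approach: the paper gives no separate argument, stating that the proposition \emph{immediately} follows from Definition \ref{def:closed}, since a closed ordering is just the closedness-with-respect-to-a-labeling condition rewritten with vertices $v_1 <_{\sigma} \cdots <_{\sigma} v_n$ in place of labels $1,\ldots,n$. Your explicit unwinding of the labeling/ordering dictionary is exactly this observation made precise.
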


The above proposition yields the following corollary.
\begin{cor} \label{cor:equiv} Let $G$ be a graph.  The following statements are equivalent:
\begin{enumerate}
\item[1.] $G$ is a closed graph;
\item[2.] $G$ has a closed ordering;
\item[3.] $G$ has a proper interval ordering;
\item[4.] $G$ is a proper interval graph.
\end{enumerate}
\end{cor}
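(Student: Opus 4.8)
The plan is to deduce the four-fold equivalence from a single central equivalence, namely $(2)\Leftrightarrow(3)$, because the two flanking equivalences are already in hand: $(1)\Leftrightarrow(2)$ is Proposition \ref{prop:closed} and $(3)\Leftrightarrow(4)$ is Theorem \ref{thm:proper}. So the whole Corollary reduces to showing that $G$ admits a closed ordering if and only if $G$ admits a proper interval ordering, and I would try to read this off by comparing Definition \ref{def:closed} with the form of Definition \ref{def:proper} recorded in Remark \ref{rem:closed}.

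The direction $(3)\Rightarrow(2)$ is immediate and keeps the same ordering: every proper interval ordering $\sigma$ is a closed ordering. Indeed, given edges $\{v_i,v_j\}$ and $\{v_k,v_\ell\}$ with $i<j$, $k<\ell$, if $i=k$ and (say) $j<\ell$ then $i<j<\ell$ with $\{v_i,v_\ell\}\in E(G)$, so Remark \ref{rem:closed} yields $\{v_j,v_\ell\}\in E(G)$; the case $j=\ell$, $i\neq k$ is symmetric and yields $\{v_i,v_k\}\in E(G)$. Thus $\sigma$ meets Definition \ref{def:closed}, and the chain $(4)\Leftrightarrow(3)\Rightarrow(2)\Leftrightarrow(1)$ is secured, leaving only the return trip from $(1)$--$(2)$ to $(3)$--$(4)$.

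The main obstacle is the converse $(2)\Rightarrow(3)$, manufacturing a proper interval ordering from a closed one. A closed ordering need not itself be proper interval: the graph with the single edge $\{v_1,v_3\}$ and $v_2$ isolated and placed between them is vacuously closed yet violates Definition \ref{def:proper} on the triple $v_1<v_2<v_3$. I would therefore first reduce to connected $G$. Both notions are well behaved with respect to components: restricting a closed (respectively proper interval) ordering to a component gives again a closed (respectively proper interval) ordering, while concatenating such orderings of the components block by block produces one for $G$, since no edge runs between two components. Hence $G$ has a closed ordering iff each component does, and likewise for proper interval orderings, so it suffices to treat the connected case.

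For connected $G$ I would show that a closed ordering $\sigma$ is forced to be proper interval. It is convenient to restate Definition \ref{def:closed} as: for every vertex, its right neighbours form a clique and its left neighbours form a clique. By Remark \ref{rem:closed} the target is that every such neighbourhood is an interval of consecutive vertices, i.e. the \emph{umbrella} shape of proper interval orderings. The crux, where I expect the real work to lie, is an extremal argument powered by connectivity: if some edge $\{v_i,v_k\}$ were to skip an intermediate vertex, one selects a shortest edge straddling such a gap and plays the two clique conditions against the choice of this edge to conclude that the vertices it straddles are adjacent to neither of its endpoints, which one then pushes to a contradiction with connectivity of $G$. With the umbrella property in place the fill-in condition of Remark \ref{rem:closed} holds, $\sigma$ is a proper interval ordering, and feeding this back through the component reduction gives $(2)\Rightarrow(3)$ and closes the cycle.
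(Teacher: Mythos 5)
Your skeleton is sound, and the pieces you treat as routine are indeed fine: $(1)\Leftrightarrow(2)$ is exactly Proposition \ref{prop:closed} and $(3)\Leftrightarrow(4)$ is exactly Theorem \ref{thm:proper} (you assign these correctly; the paper's printed proof transposes the two citations, evidently a typo), your verification that a proper interval ordering is closed is complete, the single-edge example showing that a closed ordering need not itself be a proper interval ordering is apt, and the reduction to connected components is unproblematic. The divergence from the paper is that the paper disposes of the crux $(2)\Leftrightarrow(3)$ by a bare citation to \cite[Proposition 1.8]{KM}, whereas you attempt to prove it from scratch --- and your argument for the hard direction $(2)\Rightarrow(3)$ has a genuine hole precisely at the point you yourself flag as ``the real work.''

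Concretely, call an edge $\{v_a,v_c\}$, $a<c$, \emph{deficient} if some straddled vertex $v_b$, $a<b<c$, is adjacent to neither endpoint. Your preliminary observations are correct: by the two clique conditions, a straddled vertex is adjacent to both endpoints or to neither, and for a span-minimal deficient edge one can even show that \emph{every} straddled vertex is a gap vertex. The failure is in ``pushes to a contradiction with connectivity.'' What connectivity actually yields is an escape edge $\{v_p,v_m\}$ from a straddled vertex $v_p$ ($a<p<c$) to a vertex outside, with $m<a$ or $m>c$; applying both-or-neither to this new edge shows that $v_a$ (respectively $v_c$) is a gap vertex for it, so $\{v_p,v_m\}$ is again deficient --- but its span is in no way controlled by $c-a$: when $m<a$ the new span $p-m$ may well exceed $c-a$, and likewise when $m>c$. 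So the minimal-span monovariant does not decrease and the extremal argument stalls; choosing instead the deficient edge with extremal right (or left) endpoint blocks only one escape direction, while the other regenerates deficient edges indefinitely. The natural fallback --- first proving that in a closed ordering of a connected graph consecutive vertices are adjacent, and then propagating adjacency along shared endpoints via the clique rules --- runs into the same escape-edge obstruction when one tries to find a minimal edge crossing the cut between $v_t$ and $v_{t+1}$. Some further idea is genuinely required here: this missing step is exactly the content of \cite[Proposition 1.8]{KM} (compare also the connectivity arguments for connected closed labelings in \cite{CE}), so within your plan the honest fixes are either to import that result, as the paper does, or to supply a terminating argument that your sketch does not yet contain.
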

\begin{proof} 1. $\Longleftrightarrow$ 2.  follows from Theorem \ref{thm:proper};\\ 
2. $\Longleftrightarrow$  3. can be found in \cite[Proposition 1.8]{KM};\\ 
3. $\Longleftrightarrow$ 4. follows from Proposition \ref{prop:closed}. 
\end{proof}
\begin{rem} In \cite{CR2}, the isomorphism between the class of closed graphs and the class of proper interval graphs 
has been proved by using the relevant characterization of Herzog et al. \cite[Theorem 2.2]{HEH} of a closed graph by its clique complex. In this paper, we recover the isomorphism only by vertex orderings. Moreover, one can see that the \emph{characterization} of Herzog et al. is contained in the constructive proof of the Roberts characterization of a proper interval graph in \cite[Theorem 1]{FG}. 
\end{rem}

\section{Closed graphs  via proper interval graphs} \label{sec:inter}
In this Section, we discuss a recent result on closed graphs \cite{CE} via some properties of proper interval graphs.

In order to accomplish this task we need to recall some notions from the graph theory. 

Let $G$ be a  graph. Denote by $\{u, v\}$ an indirected edge of $G$ and by $(u, v)$ a directed edge (arrow) of $G$.
A graph $G$ is called \emph{mixed} if $G$ has some directed edges (arrows) and some undirected edges such that if G
contains the directed edge $(x,y)$, then it contains neither the directed edge $(y,x)$ nor the
undirected edge $\{x,y\}$. The \emph{inset}, respectively the \emph{outset}, of a vertex $v$ in a mixed graph
$G$ is the set of all vertices $u \in V(G)$ for which $(u,v)$, respectively $(v,u)$, is a directed edge
of $G$. Note that the class of mixed graphs without directed edges
is precisely the class of undirected graphs and the class of mixed graphs without
undirected edges is precisely the class of oriented graphs.
Let $D$ be a mixed graph. If $(x,y)$ is a directed edge of $D$, then we say that $x$ dominates $y$ and write $x\rightarrow y$.
A graph $D$ is an \emph{orientation} of an undirected graph $G$ if $D$ is obtained from $G$ by orienting
each edge $\{u, v\} \in E(G)$ as an arrow  $(u, v)$ or $(v, u)$. A directed graph is called an \emph{oriented} graph if it is the orientation of an undirected graph.
A \emph{straight enumeration} of an oriented graph $D$ is a linear ordering $\{v_1, v_2,\ldots ,v_n\}$ of its vertices such that for each $i$ there exist nonnegative integers $h$ and $k$ such that
the vertex $v$ has inset $\{v_{i-1}, v_{i-2},\ldots, v_{i-h}\}$ and outset $\{v_{i+1}, v_{i+2},\ldots, v_{i+k}\}$. 
An oriented graph which admits a straight enumeration is called \emph{straight}.
An undirected graph is said to have a \emph{straight orientation} if it admits an orientation
which is a straight oriented graph.

Let $G$ be an undirected graph. For any vertex $v$, let $N(v)$ be the neighborhood
of $v$, \emph{i.e.}, the set of vertices which are adjacent to $v$. The closed neighborhood of $v$
is the set $N[v] = N(v)\cup \{v\}$. We define an equivalence relation on $V(G)$ in which
$a$ and $b$ are equivalent just if $N[a]= N[b]$.  
If the vertices $a$ and $b$ of an edge $\{a,b\}$ are equivalent, we call the edge $\{a,b\}$ \emph{balanced};
otherwise, $\{a,b\}$ is an unbalanced edge. We say that $G$ is \emph{reduced} if there are no balanced
edges, \emph{i.e.}, if distinct vertices have distinct closed neighborhoods.
The underlying graph of a mixed graph $D$ is the undirected graph $G(D)$ with the
vertex set $V(D)$ in which $\{x,y\}$ is an edge of $G(D)$ only if it is a directed or undirected
edge of $D$. We say that $D$ is connected if $G(D)$ is connected. We say that $D$ is reduced
if $G(D)$ is reduced. 
A \emph{straight mixed graph} $H$ is a mixed graph obtained from a reduced straight oriented graph $R$ by the  substitution operation which replaces each vertex $v$ of $R$ by a complete graph $T_v$, with
each vertex of $T_v$ dominating each vertex of $T_u$ if and only if $v\rightarrow u$ in $R$. Finally, the
\emph{full reversal} of an oriented graph $D$ is the operation of reversing the directions of all
oriented edges (arrow) of $D$. More details on this subject can be found in \cite{DHH}.

We quote the next results from \cite{DHH}.
\begin{prop} \label{thm:prop1} {\em\cite[Corollary 2.2]{DHH}} A graph is a proper interval graph if and only if it has a straight orientation.
\end{prop}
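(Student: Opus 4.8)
The plan is to reduce the statement to Theorem \ref{thm:proper}, which already identifies proper interval graphs with the graphs admitting a proper interval ordering, and then to build an explicit dictionary between proper interval orderings and straight orientations. The observation I would isolate first is that a straight enumeration is nothing but a proper interval ordering in disguise: in a straight enumeration every vertex $v_i$ dominates exactly a consecutive block $\{v_{i+1}, \ldots, v_{i+k}\}$ to its right and is dominated by a consecutive block $\{v_{i-h}, \ldots, v_{i-1}\}$ to its left. Accordingly I would orient every edge \emph{downward}, sending each edge $\{v_i, v_j\}$ with $i<j$ to the arrow $(v_i, v_j)$, and show that this canonical orientation mediates between the two notions.

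For the implication that a proper interval graph has a straight orientation, I would start from a proper interval ordering $v_1 <_{\sigma} \cdots <_{\sigma} v_n$ supplied by Theorem \ref{thm:proper}, equip $G$ with the downward orientation $D$ just described, and check that $v_1, \ldots, v_n$ is a straight enumeration of $D$. Fix $i$. The outset of $v_i$ is $\{v_j : j>i,\ \{v_i,v_j\}\in E(G)\}$; if $v_j$ lies in it and $i<m<j$, then $\{v_i,v_j\}\in E(G)$ together with the proper interval property (Remark \ref{rem:closed}) forces $\{v_i,v_m\}\in E(G)$, so $v_m$ lies in the outset as well. Hence the outset is gap-free and equals $\{v_{i+1}, \ldots, v_{i+k}\}$ for some $k\ge 0$. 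The symmetric argument, applied to a neighbor $v_j$ with $j<i$ and any $m$ with $j<m<i$, shows the inset equals $\{v_{i-h}, \ldots, v_{i-1}\}$ for some $h\ge 0$. This is precisely the defining condition of a straight enumeration, so $D$ is straight and $G$ has a straight orientation.

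For the converse, suppose $G$ has a straight orientation $D$ with straight enumeration $v_1, \ldots, v_n$, and let $\sigma$ be the corresponding vertex ordering. Since the outset of each vertex consists only of higher-indexed vertices, every edge $\{v_i, v_\ell\}$ with $i<\ell$ must be oriented as $(v_i,v_\ell)$. Now take any triple $i<j<\ell$ with $\{v_i,v_\ell\}\in E(G)$. Then $v_\ell$ belongs to the outset $\{v_{i+1}, \ldots, v_{i+k}\}$ of $v_i$, and since this block is consecutive and $i<j<\ell$, the vertex $v_j$ belongs to it too, giving $\{v_i,v_j\}\in E(G)$. Dually, $v_i$ belongs to the consecutive inset of $v_\ell$, and as $i<j<\ell$ the vertex $v_j$ lies in that inset as well, giving $\{v_j,v_\ell\}\in E(G)$. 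Thus $\sigma$ is a proper interval ordering and, by Theorem \ref{thm:proper}, $G$ is a proper interval graph.

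I do not anticipate a deep obstacle, since the result is essentially a transcription between two equivalent combinatorial encodings. The one point demanding genuine care is the gap-free part of the straight-enumeration condition: it is not enough that insets and outsets be intervals of indices, they must abut $v_i$ on each side, as $\{v_{i+1}, \ldots\}$ and $\{\ldots, v_{i-1}\}$, and one must verify that the proper interval property yields true consecutiveness rather than merely an interval with a hole. Confirming this, together with checking that the downward orientation is the forced one in the converse, is where I would concentrate the attention.
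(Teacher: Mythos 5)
Your proof is correct, but it cannot be compared with a proof in the paper, because the paper offers none: Proposition \ref{thm:prop1} is quoted verbatim from \cite[Corollary 2.2]{DHH}, where it is obtained as a by-product of a considerably heavier theory (straight enumerations arise there from the study of local tournaments and round enumerations, developed for proper circular-arc graphs, and the corollary is extracted from that machinery). Your route is genuinely different and much more elementary: you take Theorem \ref{thm:proper} (the Looges--Olariu characterization, which the paper also only cites) as the pivot and exhibit a direct dictionary --- orient each edge $\{v_i,v_j\}$, $i<j$, as $(v_i,v_j)$ in one direction, and recover the ordering from the enumeration in the other. Both delicate points are handled properly: in the forward direction your argument shows not merely that the outset of $v_i$ is an interval of indices but that it is the full block $\{v_{i+1},\ldots,v_{i+k}\}$ abutting $v_i$ (since any $v_m$ with $i<m<j$ and $v_j$ in the outset is forced into the outset by Remark \ref{rem:closed}, applied with $j$ ranging over all outset elements), and dually for insets; in the converse, the observation that straightness forces every edge to point from lower to higher index is exactly what legitimizes reading the enumeration as a vertex ordering and then extracting the proper interval property from consecutiveness of insets and outsets. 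What your approach buys is a self-contained equivalence of the two combinatorial encodings, making Proposition \ref{thm:prop1} a corollary of Theorem \ref{thm:proper} rather than an import from \cite{DHH}; what it does not give is the additional structural information the paper actually uses later, namely the uniqueness of the straight mixed orientation and the extension property of Proposition \ref{thm:prop2}, which still require the full strength of \cite{DHH}.
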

\begin{prop} \label{thm:prop2} {\em\cite[Corollary 2.5, Proposition 4.2]{DHH}} Let $G$ be a connected proper interval graph.
\begin{enumerate}
\item[1.] $G$ is uniquely orientable as a straight mixed graph up to full reversal.
\item[2.] If $H$ is a straight mixed orientation of a connected subgraph of $G$, and $v\in V(G)$ but $v\notin V(H)$. Then the subgraph of $G$ induced by $v$ and the vertices of $H$ is a proper interval graph.

\end{enumerate}
\end{prop}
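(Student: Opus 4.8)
The plan is to read the entire statement off a single proper interval ordering of $G$, whose existence is guaranteed by Corollary~\ref{cor:equiv}. Fix an ordering $v_1 <_{\sigma} \cdots <_{\sigma} v_n$ of $V(G)$ satisfying the condition of Remark~\ref{rem:closed}. The first point I would establish is that every closed neighbourhood $N[v_i]$ is an \emph{interval} of $\sigma$, that is, a block of consecutive vertices; this follows from a single application of Remark~\ref{rem:closed}, since if $v_a,v_c\in N[v_i]$ with $a<b<c$, then the edge joining $v_i$ to whichever of $v_a,v_c$ lies on the side of $v_b$ opposite to $v_i$ forces $v_b\in N[v_i]$. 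A short argument of the same kind then shows that the classes of the equivalence relation $N[a]=N[b]$ are themselves intervals of $\sigma$; moreover each class is a clique, because $v_k\in N[v_k]=N[v_i]$ with $v_k\neq v_i$ gives $\{v_i,v_k\}\in E(G)$, and between two distinct classes the edges of $G$ are either all present or all absent (if $x,x'$ are equivalent and $x$ is adjacent to $y$, then $y\in N[x]=N[x']$).

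With this structure in hand, existence in statement~1 is straightforward. I would leave every \emph{balanced} edge undirected and orient every \emph{unbalanced} edge $\{v_i,v_j\}$ with $i<j$ forward, as the arrow $(v_i,v_j)$. Contracting each equivalence class to a point yields the reduced graph $R$, which is connected, reduced by construction, and a proper interval graph (being isomorphic to the induced subgraph of $G$ on a transversal of the classes); the order inherited from $\sigma$ is a proper interval ordering of $R$, and the forward orientation makes it a straight oriented graph whose straight enumeration is exactly this order, since the insets and outsets are the initial and final runs cut out by the interval property. Substituting the cliques back in, with all arrows between two classes pointing the same way and all intra-class edges undirected, recovers $G$ and matches the definition of a straight mixed graph; this is consistent with Proposition~\ref{thm:prop1}.

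For uniqueness up to full reversal I would first note that, in any straight mixed orientation of $G$, the undirected edges must be exactly the balanced edges and the cliques $T_v$ must be exactly the equivalence classes, because $R$ is reduced and substitution preserves closed neighbourhoods. Everything therefore reduces to the reduced graph, and the crux is the \emph{rigidity} statement: a connected reduced proper interval graph has a proper interval ordering that is unique up to reversal. I expect this to be the main obstacle, as it is the one genuinely combinatorial input; the argument runs by identifying the source and the sink of the straight orientation as the two vertices whose closed neighbourhoods are not properly contained in a neighbour's, and then peeling off endpoints inductively, any nontrivial transposition of vertices in distinct classes being ruled out by Definition~\ref{def:proper}. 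Since reversing the ordering reverses every arrow, which is precisely the full reversal, the straight mixed orientation is unique up to full reversal.

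Statement~2 is then almost immediate and does not in fact use the straightness of $H$: the graph induced by $v$ together with $V(H)$ is an \emph{induced} subgraph of $G$, and proper interval graphs are hereditary. Concretely, restricting any proper interval representation $\{I_w\}_{w\in V(G)}$ of $G$ to the sub-family indexed by $V(H)\cup\{v\}$ gives an interval representation of the induced subgraph in which no interval properly contains another, so this subgraph is a proper interval graph, as asserted.
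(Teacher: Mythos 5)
The paper itself offers no proof of this proposition---it is quoted directly from \cite[Corollary 2.5, Proposition 4.2]{DHH}---so the only question is whether your self-contained argument is sound. Much of it is. Your treatment of statement~2 is correct: as phrased in the paper, the conclusion is nothing more than heredity of proper interval graphs under induced subgraphs, and the straightness of $H$ is indeed never used. (This observation also reveals that the paper's phrasing is a weakened paraphrase of \cite[Proposition 4.2]{DHH}: the literal statement presupposes that $G$ is a proper interval graph, which would make its invocation in the proof of Theorem~\ref{thm:narrow2} circular, since there $G$ is precisely what one is trying to show is proper interval. That, however, is the paper's defect, not yours.) The existence half of statement~1 also checks out: closed neighbourhoods are $\sigma$-intervals, the equivalence classes are $\sigma$-convex cliques with all-or-nothing adjacency between distinct classes, the quotient $R$ is connected, reduced, and inherits a proper interval ordering, and orienting the unbalanced edges forward yields a straight enumeration; likewise your observation that in any straight mixed orientation the undirected edges must be the balanced ones and the cliques $T_v$ must be the equivalence classes is correct.

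The genuine gap is in the uniqueness half, exactly at the step you flagged as the crux. The reduction to the rigidity statement (a connected reduced proper interval graph has a proper interval ordering unique up to reversal) is fine, but your sketch of rigidity is wrong. You propose to identify the source and sink as \emph{the two vertices whose closed neighbourhoods are not properly contained in a neighbour's}. In fact the opposite containment always holds at the ends: if $v_1$ is the $\sigma$-first vertex and $v_r$ its $\sigma$-last neighbour, then $N[v_1]=\{v_1,\dots,v_r\}\subseteq N[v_r]$, and properly so whenever the graph is reduced and $v_1$ has a neighbour. The path $a$--$b$--$c$ is already a counterexample: it is connected and reduced, the ends of its (unique up to reversal) ordering are $a$ and $c$, yet your criterion selects only the middle vertex $b$. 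Nor does the dual criterion (``$N[v]$ properly contains no neighbour's closed neighbourhood'') repair matters: on the reduced graph with vertices $1,\dots,5$, edges $\{1,2\},\{2,3\},\{2,4\},\{3,4\},\{4,5\}$ and proper interval ordering $1<2<3<4<5$, the simplicial vertex $3$ satisfies it alongside $1$ and $5$, so no purely local containment test among neighbours isolates the two ends. The subsequent ``peeling off endpoints inductively'' also stumbles, because deleting an end vertex can destroy reducedness (remove $a$ from $a$--$b$--$c$ and $N[b]=N[c]$), so the induction hypothesis does not apply directly to the smaller graph. What is missing is precisely the nontrivial content of \cite[Corollary 2.5]{DHH} (essentially Roberts' uniqueness theorem for indifference orders); a workable substitute would argue via the essentially unique consecutive arrangement of maximal cliques, or via the monotonicity of $v\mapsto \min N[v]$ and $v\mapsto \max N[v]$ in any proper interval ordering, rather than via local source/sink detection.
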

In \cite{CE}, the authors have proved a new characterization of the class of closed graphs by a property that they have called \emph{narrow}. Given vertices $v,w$ of $G$ satisfying $d(v,w) = \diam(G)$, a shortest path
connecting $v$ and $w$ is called a \emph{longest shortest path} of $G$.
\begin{defn} A graph $G$ is \emph{narrow} if for every $v \in V (G)$ and every
longest shortest path $P$ of $G$, either $v \in V (P)$ or there is $w \in V (P)$ with $\{v,w\}\in
E(G)$.
\end{defn}

In other words, a connected graph is narrow if every vertex is distance at most
one from every longest shortest path. 
\begin{thm} \label{thm:narrow} {\em \cite[Corollary 1.5]{CE}} Let $G$ be a graph. $G$ is closed if and only if it is chordal, claw-free and narrow.
\end{thm}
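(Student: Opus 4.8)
The plan is to invoke Corollary~\ref{cor:equiv}, which identifies closed graphs with proper interval graphs, and thereby reduce the statement to the equivalent one: a connected graph $G$ is a proper interval graph if and only if it is chordal, claw-free and narrow. (Since $\diam(G)$ and the notion of a longest shortest path presuppose connectedness, I treat the connected case, the general statement following componentwise.) Throughout I fix, whenever $G$ is proper interval, a proper interval ordering $v_1 <_{\sigma} \cdots <_{\sigma} v_n$ (Theorem~\ref{thm:proper}). Its basic feature is that each closed neighborhood $N[v_i]$ is an interval of consecutive vertices; orienting every edge from its smaller to its larger endpoint then produces precisely a straight enumeration, linking the discussion to Propositions~\ref{thm:prop1} and~\ref{thm:prop2}.

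For the forward implication, chordality and claw-freeness fall out of the ordering. If an induced cycle of length $\ge 4$ existed, its $\sigma$-smallest vertex $v_i$ would have both cycle-neighbors above it, say $v_j <_{\sigma} v_k$, and Definition~\ref{def:proper} would force the chord $\{v_j,v_k\}\in E(G)$, a contradiction; hence $G$ is chordal. If a claw were centered at some $v$, two of its three leaves would lie on the same $\sigma$-side of $v$, and the ordering would force them adjacent, again a contradiction; hence $G$ is claw-free. For narrowness I would first record that in the connected case consecutive vertices are adjacent, so that $f(i)=\max\{j:\{v_i,v_j\}\in E(G)\}$ is monotone, distances are computed by greedy jumps, and $\diam(G)=d(v_1,v_n)$. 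Given a longest shortest path $P$ between $s=v_i$ and $t=v_j$ with $i<j$, any vertex whose index lies in $[i,j]$ sits between two consecutive vertices of $P$ and hence, by Definition~\ref{def:proper}, is adjacent to both; a short distance estimate shows that a vertex to the left of $s$ must be adjacent to $s$, and one to the right of $t$ to $t$. Thus every vertex is within distance one of $P$, which is narrowness.

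For the backward implication I would use the classical forbidden-subgraph description: among chordal claw-free graphs, the proper interval graphs are exactly those with no induced net and no induced tent (the two configurations of Figure~\ref{claw}). It therefore suffices to prove the contrapositive, that a connected chordal claw-free graph containing an induced net or tent is not narrow. In both the net and the tent the three outer vertices form an asteroidal triple, and a direct computation produces a longest shortest path that misses one of them at distance at least two.

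The main obstacle is exactly this last step, because narrowness is not hereditary: an induced net or tent inside $G$ does not by itself prevent $G$ from being narrow, so one must use the chordality and claw-freeness of the ambient graph to show that the asteroidal triple still forces, in $G$ itself, a longest shortest path together with a vertex at distance at least two from it. The cleaner route I would actually attempt is constructive: take a longest shortest path as a spine $p_0 \rightarrow \cdots \rightarrow p_D$; narrowness guarantees that every vertex lies on or is adjacent to the spine, while chordality together with the shortest-path property confines each off-spine vertex to a window of at most three consecutive spine vertices. One then inserts these vertices into a linear order and orients the remaining edges, using claw-freeness to settle the local structure, and checks that the result is a straight orientation; by Proposition~\ref{thm:prop1} this exhibits $G$ as a proper interval graph. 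Verifying that the constructed ordering genuinely satisfies Definition~\ref{def:proper} (equivalently, that all insets and outsets are intervals) is where the real work lies.
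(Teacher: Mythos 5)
Before assessing your argument, note a structural fact: the paper does not prove this statement at all. Theorem \ref{thm:narrow} is quoted verbatim from \cite[Corollary 1.5]{CE}, and the paper's logical flow runs in the opposite direction: Theorem \ref{thm:narrow} is used as an ingredient in the proof of Theorem \ref{thm:narrow2} (it is invoked there to conclude that $G-\{v\}$ is closed). So there is no internal proof to compare against, and your proposal must stand on its own. In particular, be careful that you cannot close any gap below by appealing to Theorem \ref{thm:narrow2}: within this paper that would be circular.

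On the merits, your forward direction is essentially sound. The umbrella-property arguments for chordality and claw-freeness are correct, and your narrowness argument can be made rigorous: for a longest shortest path $P$ from $v_i$ to $v_j$, the straddling-pair observation handles indices in $[i,j]$, and for $v_h$ with $h<i$ one takes a shortest path $Q$ from $v_h$ to $v_j$ of length $L\le\diam(G)$, finds a consecutive pair of $Q$ straddling index $i$, and the inequality $\diam(G)\le L-m$ forces $m=0$, i.e.\ $v_h$ adjacent to $v_i$. The backward direction, however, contains a genuine gap: both of your proposed routes stop precisely at the decisive step. In the contrapositive route you correctly diagnose that narrowness is not hereditary, so an induced net or tent does not by itself produce, in $G$, a longest shortest path missed at distance two (indeed, the paper's own Proposition \ref{pro:narrow} suffers from exactly this defect, since its proof treats a path of the induced copy as a longest shortest path of $G$) --- but having named the obstacle you offer no argument to overcome it. In the constructive route, your window observation is correct (a shortest path is induced, so by chordality and the shortest-path property each off-spine vertex is adjacent to at most three consecutive spine vertices), but the insertion of off-spine vertices into a linear order, the orientation of the remaining edges via claw-freeness, and the verification that all insets and outsets are intervals constitute exactly the substance of the Cox--Erskine proof, and you explicitly defer them (``where the real work lies''). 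A proposal that locates the real work without performing it has not established the implication chordal $+$ claw-free $+$ narrow $\Rightarrow$ closed; as it stands, only the easy half of the theorem is proved.
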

\begin{prop} \label{pro:narrow}  A narrow graph $G$ is both net-free and tent-free.
\end{prop}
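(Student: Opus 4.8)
The plan is to argue by contradiction, using the narrow hypothesis to force a vertex lying at distance at least two from some longest shortest path. Since a disconnected graph has infinite diameter, a narrow graph is automatically connected, so throughout I may assume $G$ is connected. Suppose first that $G$ contains an induced net, say on the triangle $a,b,c$ together with the pendants $a',b',c'$ (with $a'$ adjacent only to $a$ among the six vertices, and similarly for $b'$ and $c'$). I will produce a longest shortest path $P$ of $G$ and one pendant, say $c'$, with $c'\notin V(P)$ and with no neighbour of $c'$ on $P$; this contradicts the definition of narrow and shows $G$ is net-free. The tent will be treated by the same template, with the role of the pendants played by the three mutually non-adjacent \virgolette{outer} vertices, each joined to two vertices of the central triangle.

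The heart of the argument is a distance bookkeeping along a fixed longest shortest path $P\colon p_0,p_1,\dots,p_d$ with $d=\diam(G)$. First I would record the elementary constraints coming from the clique $\{a,b,c\}$: on a geodesic no three pairwise adjacent vertices can occur, so the three triangle vertices sit within two consecutive levels of the breadth-first layering from $p_0$, and each pendant hangs off its own triangle vertex. Applying the narrow hypothesis to each of $a',b',c'$ pins down, for each pendant not already on $P$, a neighbour on $P$; and since within the induced net a pendant sees only its triangle vertex, its closest approach to $P$ is controlled by the position of that triangle vertex. Combining these localisations, the three pendants would have to attach to $P$ inside a window of a few consecutive path vertices, and a short case check then shows that two pendants are forced either to be adjacent or to acquire a common path neighbour together with a triangle vertex; the first possibility contradicts the non-adjacency of pendants in the induced net, and the second produces a claw, so in every case one pendant is left at distance at least two from $P$.

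The step I expect to be the main obstacle is exactly the control of \virgolette{external shortcuts}: a priori a pendant may reach $P$ through a neighbour lying outside the six net vertices, and such a vertex is invisible to the induced-subgraph adjacencies. To handle this I would choose $P$ as economically as possible (for instance a longest shortest path minimising the total distance to $\{a',b',c'\}$) and use the geodesic property of $P$ to show that any such external neighbour would either shorten the diametral distance or force one of the forbidden adjacencies above. The tent case runs along the same lines: its three outer vertices are pairwise non-adjacent and each is joined to two vertices of the central triangle, so the same bookkeeping --- now with a two-edge path through a triangle vertex joining two outer vertices playing the role of the obstructing geodesic --- shows that one outer vertex must sit at distance at least two from a suitable longest shortest path, whence $G$ is tent-free.
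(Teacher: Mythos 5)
Your diagnosis of where the difficulty lies is exactly right, but your plan does not overcome it, and it is worth comparing with what the paper actually does. The paper's proof simply exhibits the path $z,c,b,y$ inside the net (respectively $d,e,f$ inside the tent) and computes distances as if $G$ were the net (tent) itself; this verifies that the net and the tent are themselves not narrow, i.e.\ it covers the situation in which the exhibited path is still a longest shortest path of $G$, and it silently ignores the problem of an induced copy sitting inside a larger host graph. You correctly identify that problem (your ``external shortcuts''), but your repair is a sequence of unproved assertions: the ``window'' localisation of the pendants along $P$, the ``short case check'', and the claim that an external neighbour of a pendant ``would either shorten the diametral distance or force one of the forbidden adjacencies'' are all announced rather than argued. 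Worse, one of your two terminal cases is not a contradiction at all: deriving a claw contradicts nothing, because the proposition assumes only narrowness, and narrow graphs may perfectly well contain claws.

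More decisively, no amount of bookkeeping can close the gap, because the unrestricted statement you are aiming at is false: a narrow graph can contain an induced net. Take the net on $\{a,b,c,x,y,z\}$ (triangle $a,b,c$; pendant edges $\{a,x\},\{b,y\},\{c,z\}$), add a path $p_0,p_1,p_2,p_3,p_4$, and join all six net vertices to $p_2$. The set $\{a,b,c,x,y,z\}$ still induces a net. Every net vertex has eccentricity $3$, so $\diam(G)=4$ and the only diametral pair is $(p_0,p_4)$; since $N(p_1)\cap N(p_3)=\{p_2\}$, the unique longest shortest path is $p_0,p_1,p_2,p_3,p_4$, and every vertex either lies on it or is adjacent to $p_2$. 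Hence $G$ is narrow. The same construction with the tent in place of the net defeats the tent half. So the proposition can only be salvaged in the restricted sense in which the paper's two-line computation is valid (the exhibited path remains a longest shortest path of $G$, e.g.\ $G$ is the net or the tent), or under the additional hypotheses present in its sole application, Theorem \ref{thm:narrow2}, where $G$ is chordal and claw-free and net- and tent-freeness follow from Theorem \ref{thm:narrow} together with the classical characterization of proper interval graphs. Your attempt, by targeting arbitrary narrow graphs with an induced net, is trying to prove something stronger than the paper's argument establishes --- and stronger than what is true.
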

\begin{proof} In fact, if $G$ contains a copy of the net
\begin{figure}[H]
\centering
\begin{tikzpicture}
\centering
\filldraw (-0.5,-0.5) circle (2pt) node[above left][black] {$a$}
(-1.5,-1.5) circle (2pt) node[below left][black] {$x$}
(2.5,-1.5) circle (2pt) node[below right][black] {$y$}
(0.5,2) circle (2pt) node[above right][black] {$z$}
(0.5,1) circle (2pt) node[above right][black] {$c$}
(1.5,-0.5) circle (2pt) node[above right][black] {$b$};
\draw [line width=1pt] (-0.5,-0.5) -- (0.5,1.0) node[midway,left] {};
\draw [line width=1pt] (0.5,1.0) -- (1.5,-0.5) node[midway,right] {};
\draw [line width=1pt] (-0.5,-0.5) -- (1.5,-0.5) node[midway,right] {};
\draw [line width=1pt] (-1.5,-1.5) -- (-0.5,-0.5) node[midway,left] {};
\draw [line width=1pt] (1.5,-0.5) -- (2.5,-1.5) node[midway,right] {};
\draw [line width=1pt] (0.5,1.0) -- (0.5,2) node[midway,right] {};
\end{tikzpicture}
\end{figure}
\par\noindent
as an induced subgraph, then the narrowness fails since the vertex $x$ is distance $2$ from the longest shortest path $z,c,b,y$. If $G$ contains a copy of the tent
\begin{figure}[H]
\centering
\begin{tikzpicture}
\centering
\filldraw (0,0) circle (2pt) node[below left][black] {$a$}
(1,1.50) circle (2pt) node[above left][black] {$b$}
(2,0) circle (2pt) node[below right][black] {$c$}
(3,1.5) circle (2pt) node[above right][black] {$e$}
(2,3)circle (2pt) node[above right][black] {$f$}
(4,0)circle (2pt) node[below right][black] {$d$};
\draw [line width=1pt] (0,0) -- (1,1.50) node[midway,left] {};
\draw [line width=1pt] (1,1.50) -- (2,0) node[midway,right] {};
\draw [line width=1pt] (0,0) -- (3,0) node[midway,right] {};
\draw [line width=1pt] (2,0) -- (3,1.50) node[midway,right] {};
\draw [line width=1pt] (1,1.50) -- (3,1.5) node[midway,right] {};
\draw [line width=1pt] (3,0) -- (4,0) node[midway,right] {};
\draw [line width=1pt] (3,1.50) -- (4,0) node[midway,right] {};
\draw [line width=1pt] (1,1.5) -- (2,3) node[midway,right] {};
\draw [line width=1pt] (2,3) -- (4,0) node[midway,right] {};

\end{tikzpicture}
\end{figure}
\par\noindent as an induced subgraph, then the narrowness fails since the vertex $a$ is distance $2$ from the longest shortest path $d,e,f$.
 
\end{proof}
The  next result underlines once again the isomorphism between closed graphs and proper interval graphs.
\begin{thm} \label{thm:narrow2} Let $G$ be a chordal claw-free graph. Then $G$ is narrow if and only if it  is net-free or tent-free.
\end{thm}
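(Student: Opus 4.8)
The plan is to prove the two implications separately and to route the nontrivial one through the characterization of proper interval graphs already assembled in the paper.

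For the direction \virgolette{narrow $\Rightarrow$ net-free or tent-free} nothing new is required: by Proposition \ref{pro:narrow} a narrow graph is simultaneously net-free \emph{and} tent-free, so in particular it is net-free or tent-free. Hence this implication is immediate.

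For the converse I would show that a chordal, claw-free graph $G$ which is net-free or tent-free is a proper interval graph; narrowness then comes for free, since by Corollary \ref{cor:equiv} a proper interval graph is closed and by Theorem \ref{thm:narrow} a closed graph is narrow. To exhibit the proper interval structure I would use the straight-orientation machinery of Propositions \ref{thm:prop1} and \ref{thm:prop2}: reduce $G$ and then try to grow a straight orientation one vertex at a time, invoking the extension property of Proposition \ref{thm:prop2}(2), so that the construction can only break down through the appearance of a forbidden configuration. Chordality rules out induced holes and the hypothesis rules out the claw, so the single remaining obstruction to a straight orientation is a sun/net-type pattern. The point I must then establish is that excluding only one of the net and the tent already excludes the other, i.e.\ a \emph{coupling lemma}: \textbf{in a chordal, claw-free graph, being net-free is equivalent to being tent-free}. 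Granting it, \virgolette{net-free or tent-free} upgrades to \virgolette{net-free and tent-free}, the straight orientation exists, and $G$ is proper interval.

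The hard part is precisely this coupling lemma, and it is where the \virgolette{or} formulation has to be earned. At the level of the minimal obstructions the net and the tent are distinct six-vertex graphs, each containing only itself among the two; so the equivalence cannot be read off a single induced subgraph and must be forced by the ambient structure of $G$. Concretely I would start from an induced tent — a triangle together with three corner vertices each adjacent to two of its vertices — and use claw-freeness at the corners, together with chordality to control the resulting four-cycles, to locate the three mutually non-adjacent pendants that display a net; and symmetrically I would pass from a net back to a tent by using chordality to supply the triangle edges among the neighbours forced by claw-freeness. Carrying out these two passages so that the produced vertices are genuinely present and induce exactly the required subgraph, with no spurious chords, is the delicate step on which the whole converse rests.
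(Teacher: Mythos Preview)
Your coupling lemma is false, and that sinks the converse. The net itself is chordal and claw-free (each triangle vertex has a neighbourhood containing an edge, each pendant has degree~$1$), and it is tent-free simply because it has six vertices and six edges while the tent has nine; yet it is obviously not net-free. Symmetrically, the tent (the $3$-sun) is chordal, claw-free and net-free but not tent-free. So within the class of chordal claw-free graphs, net-free and tent-free are genuinely independent conditions: no \virgolette{ambient structure} can force one from the other, because the net and the tent already \emph{are} chordal and claw-free. Your own observation that each six-vertex obstruction contains only itself among the two was the warning sign.

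This actually shows that the theorem, read literally with \virgolette{or}, is false: the net is chordal, claw-free and (net-free \emph{or} tent-free), but it is not narrow. The paper's own argument silently works with \virgolette{and}; its first line for the converse reads \virgolette{suppose there exists a graph $G$ which is chordal, claw-free, net-free, tent-free and not narrow}. So the statement you should be proving is the \virgolette{and} version, and for that no coupling is needed.

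For the \virgolette{and} version the paper gives a short minimal-counterexample argument using exactly the tool you identified. Take $G$ chordal, claw-free, net-free, tent-free, not narrow, with $|V(G)|$ minimal; then $G$ is connected, and one may delete a vertex $v$ keeping $G-\{v\}$ connected. All four hereditary hypotheses pass to $G-\{v\}$, so by minimality $G-\{v\}$ is narrow, hence closed (Theorem~\ref{thm:narrow}), hence proper interval (Corollary~\ref{cor:equiv}). Proposition~\ref{thm:prop2} then gives a straight mixed orientation of $G-\{v\}$ and says that adjoining $v$ still yields a proper interval graph; so $G$ is proper interval, hence narrow, a contradiction. Your instinct to build the straight orientation via Proposition~\ref{thm:prop2}(2) was right; the error was only in trying to salvage the \virgolette{or}.
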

\begin{proof} The necessary condition follows from Proposition \ref{pro:narrow}. Conversely, suppose there exists a graph $G$ which is chordal, claw-free, net-free, tent-free and not narrow. Assume also that $G$ has the minimal numbers of vertices. Therefore, $G$ is connected. Let $v$ be a vertex of $G$ such that $G-\{v\}$ remains connected. Since $G-\{v\}$ is chordal, claw-free, net-free, or tent-free, the minimality of $\vert V(G)\vert$ implies that $G-\{v\}$ is narrow and so $G-\{v\}$ is a closed graph  (Theorem \ref{thm:narrow}) or, equivalently, a proper interval graph.  Hence, from Proposition \ref{thm:prop2}, $G-\{v\}$ can be oriented as a straight mixed graph $H$ and $G$ itself is orientable as a straight mixed graph. Therefore, from Proposition \ref{thm:prop1}, $G$ is  a proper interval graph (\emph{i.e.}, closed), and consequently a narrow graph. A contradiction.
\end{proof}
\begin{rem} One can observe that combining Theorem \ref{thm:narrow} with Theorem \ref{thm:narrow2}, one gets that \emph{a graph $G$ is closed if and only if it is chordal, claw-free, net-free, or tent-free}, and this is one of the \emph{classical} characterizations of a proper interval graph \cite{MCG}.
\end{rem}


\end{document}